\theoremstyle{definition}
\newtheorem{theo}{Theorem}[section]
\newtheorem{lemma}[theo]{Lemma}
\newtheorem{prop}[theo]{Proposition}
\newtheorem{rema}[theo]{Remark}
\title[quasi-morphisms on groups of symplectomorphisms of the disk]{The translation number and quasi-morphisms on groups of symplectomorphisms of the disk}
\author{Shuhei Maruyama}
\address{Graduate School of Mathematics, Nagoya University, Japan}
\email{m17037h@math.nagoya-u.ac.jp}
\begin{document}

\begin{abstract}
  On groups of symplectomorphisms of the disk, we construct
  two homogeneous quasi-morphisms which relate to the
  Calabi invariant and the flux homomorphism
  respectively.
  We also show the relation between the quasi-morphisms
  and the translation number introduced by Poincar\'{e}.
\end{abstract}

\maketitle

\section{Introduction}

A quasi-morphism on a group $\Gamma$ is a function
$\phi : \Gamma \to \mathbb{R}$
such that the value
\[
  \sup_{\gamma_1, \gamma_2 \in \Gamma}
  | \phi(\gamma_1\gamma_2) - \phi(\gamma_1) - \phi(\gamma_2)|
\]
is bounded.
A quasi-morphism $\phi$ is called homogeneous
if the condition $\phi(\gamma^n) = n\phi(\gamma)$ holds
for any $\gamma \in \Gamma$ and $n \in \mathbb{Z}$.
Let $Q(\Gamma)$ denote the $\mathbb{R}$-vector space of
homogeneous quasi-morphisms on the group $\Gamma$.
Given a quasi-morphism $\phi$,
we obtain the homogeneous quasi-morphism $\overline{\phi}$
associated to $\phi$ by
\[
  \overline{\phi}(g) = \lim_{n \to \infty} \frac{\phi(g^n)}{n}.
\]
This map $\overline{\phi}$ is called the homogenization of $\phi$.

Let $D = \{ (x,y) \in \mathbb{R}^2 \mid x^2 + y^2 \leq 1 \}$
be the unit disk in $\mathbb{R}^2$ and
$\omega = dx \wedge dy$ be the standard symplectic form on $D$.
Let $G = {\rm Symp}(D)$ be the group of symplectomorphisms of $D$
(which may not be the identity on the boundary $\partial D$).
In the present paper, we construct
a homogeneous quasi-morphism on $G$.
Let $\eta$ be a $1$-form on $D$ satisfying $d\eta = \omega$.
The map $\tau_{\eta}:G \to \mathbb{R}$ is defined by
\[
  \tau_{\eta}(g) = \int_D g^*\eta \wedge \eta.
\]
Let $G_{\rm rel}$ denote the kernel of the homomorphism
$G \to {\rm Diff}_{+}(S^1)$,
where ${\rm Diff}_{+}(S^1)$ denotes the group of
orientation preserving diffeomorphisms of the circle.
Then the map $\tau$ coincides with the Calabi invariant
on $G_{\rm rel}$.
Although the Calabi invariant ${\rm Cal} : G_{\rm rel} \to \mathbb{R}$
is a homomorphism,
the map $\tau_{\eta} : G \to \mathbb{R}$ is not a homomorphism.
However, this map $\tau_{\eta}$ gives rise to a quasi-morphism.
Thus, by the homogenization,
we have the homogeneous quasi-morphism
$\overline{\tau_{\eta}}$.
Since $\overline{\tau_{\eta}}$ is independent
of the choice of $\eta$, we simply denote it by $\overline{\tau}$.
This $\overline{\tau}$
is the main object of the present paper.

It is known that the Calabi invariant
${\rm Cal} : G_{\rm rel} \to \mathbb{R}$
cannot be extended to a homomorphism $G \to \mathbb{R}$
(see Tsuboi\cite{tsuboi00}).
However, the Calabi invariant \textit{can be} extended to a
homogeneous quasi-morphism on $G$.
Indeed, we will show in Proposition \ref{tau_surj} that
the homogeneous quasi-morphism
$\overline{\tau} : G \to \mathbb{R}$ gives rise to
an extension of the Calabi invariant.
There is another extension $R$ of the Calabi invariant,
which is introduced by Tsuboi\cite{tsuboi00}
(see also Banyaga\cite{banyaga78}).
This extension $R$ is defined as a homomorphism
to $\mathbb{R}$
from the universal covering group
$\widetilde{G}$ of $G$ by
\[
  R([g_t]) = \int_0^1 \left( \int_D f_{X_t}\omega \right) dt.
\]
Here $g_t$ is a path in $G$ and $f_{X_t}$ is the
Hamiltonian function associated to $g_t$
(see Section $4$).
Then, it is natural to ask what the relation between two extensions
$\tau$ and $R$ of the Calabi invariant is.
The following theorem answers this.

\begin{theo}[Theorem \ref{main_thm_1}]\label{main_thm_1_intro}
  Let $p: \widetilde{G} \to G$ be the projection.
  Then, we have
  \[
    p^*\overline{\tau} + 2R = \pi^2 \widetilde{\rm rot}
    : \widetilde{G} \to \mathbb{R}.
  \]
  Here the map $\widetilde{\rm rot}:\widetilde{G} \to \mathbb{R}$
  is the pullback of Poincar\'{e}'s translation number
  by the surjection
  $\widetilde{G} \to \widetilde{{\rm Diff}_{+}(S^1)}$.
\end{theo}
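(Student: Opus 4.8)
The plan is to establish the identity first at the level of the non-homogeneous function $\tau_\eta$ together with an explicit ``average displacement'' functional on the boundary circle, and only afterwards to homogenize. This is legitimate because homogenization is linear, because it commutes with pullback along the homomorphisms $p$ and ${\rm bd}:\widetilde{G}\to\widetilde{{\rm Diff}_{+}(S^1)}$, and because $R$, being a homomorphism, is already homogeneous. Throughout I fix the primitive $\eta=\tfrac12(x\,dy-y\,dx)=\tfrac12 r^2\,d\theta$ and normalize the Hamiltonian by $\iota_{X_t}\omega=df_{X_t}$ and $f_{X_t}|_{\partial D}=0$ (the normalization under which $R$ is defined); this is available because $X_t$ is tangent to $\partial D$, which forces $f_{X_t}$ to be constant on $\partial D$.

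First I would differentiate along a smooth path $g_t$ with $g_0=\mathrm{id}$, $g_1=g$, generated by $X_t$ (so $\tfrac{d}{dt}g_t=X_t\circ g_t$). Using $\tfrac{d}{dt}g_t^*\eta=g_t^*(\mathcal L_{X_t}\eta)$, Cartan's formula, $d\eta=\omega$ and $\iota_{X_t}\omega=df_{X_t}$, one gets $\mathcal L_{X_t}\eta=d u_t$ with $u_t=\iota_{X_t}\eta+f_{X_t}$. Hence $\tfrac{d}{dt}\tau_\eta(g_t)=\int_D d(u_t\circ g_t)\wedge\eta$, and Stokes' theorem, together with $g_t^*\omega=\omega$ and $g_t(D)=D$ for the change of variables, yields the splitting
\[
  \frac{d}{dt}\tau_\eta(g_t)=\int_{\partial D}(u_t\circ g_t)\,\eta-\int_D u_t\,\omega .
\]

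The next step is to evaluate the two terms after integrating over $t\in[0,1]$ (note $\tau_\eta(\mathrm{id})=\int_D\eta\wedge\eta=0$). For the bulk term I would introduce the Liouville vector field $Z=\tfrac12 r\,\partial_r$ characterized by $\iota_Z\omega=\eta$; then $\iota_{X_t}\eta=\omega(Z,X_t)=-Z(f_{X_t})$, and an integration by parts using $\mathcal L_Z\omega=\omega$ and $f_{X_t}|_{\partial D}=0$ gives $\int_D\iota_{X_t}\eta\,\omega=\int_D f_{X_t}\,\omega$. Thus $\int_D u_t\,\omega=2\int_D f_{X_t}\,\omega$, so the integrated bulk term is exactly $2R([g_t])$. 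For the boundary term, on $\partial D$ we have $f_{X_t}=0$ and $\eta=\tfrac12 d\theta$, so writing $X_t|_{\partial D}=a_t(\theta)\partial_\theta$ the integrand becomes $\tfrac14 a_t(\bar g_t(\theta))$, where $\bar g_t$ is the induced path of circle diffeomorphisms. Since the lift satisfies $\tfrac{d}{dt}\widetilde{\bar g}_t(\theta)=a_t(\widetilde{\bar g}_t(\theta))$, integrating in $t$ telescopes to
\[
  \int_0^1\!\int_{\partial D}(u_t\circ g_t)\,\eta\,dt=\frac14\int_0^{2\pi}\bigl(\widetilde{\bar g}_1(\theta)-\theta\bigr)\,d\theta .
\]
Combining, I obtain $p^*\tau_\eta+2R=\tfrac{\pi}{2}\,\Phi\circ{\rm bd}$, where $\Phi(\widetilde\phi)=\tfrac1{2\pi}\int_0^{2\pi}(\widetilde\phi(\theta)-\theta)\,d\theta$.

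Finally I would homogenize both sides. The functional $\Phi$ is a quasi-morphism on $\widetilde{{\rm Diff}_{+}(S^1)}$ whose homogenization is Poincar\'e's translation number in the $2\pi$-periodic normalization: by dominated convergence $\tfrac1n(\widetilde\phi^n(\theta)-\theta)\to\widetilde{\rm rot}$ uniformly in $\theta$, so $\overline\Phi=\widetilde{\rm rot}$ up to the angular scaling. Since homogenization is linear, commutes with ${\rm bd}^*$, and leaves the homomorphism $R$ unchanged, homogenizing the identity above gives $p^*\overline\tau+2R=\tfrac{\pi}{2}\,\overline{\Phi\circ{\rm bd}}$; converting the angular normalization $S^1=\mathbb R/2\pi\mathbb Z$ to Poincar\'e's $\mathbb R/\mathbb Z$ normalization multiplies $\overline\Phi$ by $2\pi$, and $\tfrac{\pi}{2}\cdot 2\pi=\pi^2$ produces the claimed $\pi^2\,\widetilde{\rm rot}$. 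The hard part will be the bulk identity $\int_D\iota_{X_t}\eta\,\omega=\int_D f_{X_t}\,\omega$: it is exactly what forces the non-rotational part of $\tau_\eta$ to cancel against $R$ and reassemble into $2R$ with no leftover term, and it relies delicately on the Liouville structure of $\eta$ and on the boundary normalization of $f_{X_t}$. The remaining nuisance is the constant bookkeeping that yields $\pi^2$.
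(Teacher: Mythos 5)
Your proposal is correct, and its skeleton is the same as the paper's: first establish the non-homogeneous identity expressing $p^*\tau_\lambda + 2R$ as a boundary average-displacement functional (your $\tfrac{\pi}{2}\,\Phi\circ{\rm bd}$ is exactly the paper's $\pi^2 f\circ{\rm bd}$, since the paper's $f$ equals $\tfrac{1}{2\pi}\Phi$), then homogenize both sides using that $R$ is a homomorphism and that homogenization commutes with pullback by homomorphisms, and finally identify the homogenization of the displacement functional with $\widetilde{\rm rot}$ via the uniform convergence of $\tfrac{1}{n}(\widetilde{\varphi}^n(\theta)-\theta)$. The one substantive difference is that the paper does not prove the key identity at all: it is quoted as Lemma 2.3, ``essentially proved in Tsuboi [Lemme 1.5],'' whereas you derive it from scratch by differentiating $\tau_\eta(g_t)$ along the path, applying Cartan's formula to get $\mathcal{L}_{X_t}\eta = d(\iota_{X_t}\eta + f_{X_t})$, splitting via Stokes into boundary and bulk terms, and then using the Liouville field $Z$ with $\iota_Z\omega=\eta$, $\mathcal{L}_Z\omega=\omega$, together with the normalization $f_{X_t}|_{\partial D}=0$, to prove $\int_D \iota_{X_t}\eta\,\omega = \int_D f_{X_t}\,\omega$, which is precisely what makes the bulk term reassemble into $2R$. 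I checked this computation (including the integration by parts $\int_D Z(f)\,\omega = -\int_D f\,\omega$ and the constant bookkeeping $\tfrac{\pi}{2}\cdot 2\pi = \pi^2$) and it is sound; so your argument buys a self-contained proof where the paper outsources its main technical input, at the cost of being tied to the specific primitive $\lambda = \tfrac12 r^2\,d\theta$ (which is harmless, since the homogenization $\overline{\tau}$ is independent of the choice of primitive).
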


Let $G_o = \{ g \in G \mid g(o) = o \}$ be the subgroup
of $G$ consisting of symplectomorphisms which preserve the origin
$o = (0,0) \in D$.
On the group $G_o$, we also construct a homogeneous quasi-morphism
$\overline{\sigma} = \overline{\sigma}_{\eta, \gamma}
: G_o \to \mathbb{R}$, where
$\sigma_{\eta, \gamma} : G_o \to \mathbb{R}$ is defined by
\[
  \sigma_{\eta, \gamma}(g) = \int_{\gamma}g^*\eta - \eta.
\]
Here the symbol $\gamma$ is a path from the origin
to a point on the boundary.
Let $\widetilde{G_o}$ be the universal covering group of
$G_o$.
By using the homomorphism
$S:\widetilde{G_o} \to \mathbb{R}$ introduced in Section
\ref{section_flux},
we describe the relation between $\overline{\sigma}$ and the
translation number,
which is similar to Theorem \ref{main_thm_1_intro}.
\begin{theo}[Theorem \ref{main_thm_2}]
  Let $p: \widetilde{G_o} \to G_o$ be the projection.
  Then, we have
  \[
    p^*\overline{\sigma} - S = \pi \widetilde{\rm rot}
    : \widetilde{G_o} \to \mathbb{R}.
  \]
  Here the map $\widetilde{\rm rot}:\widetilde{G_o} \to \mathbb{R}$
  is the pullback of the translation number
  by the surjection
  $\widetilde{G_o} \to \widetilde{{\rm Diff}_{+}(S^1)}$.
\end{theo}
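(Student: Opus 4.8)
The plan is to mirror the strategy behind Theorem~\ref{main_thm_1_intro}: compute the variation of $\sigma_{\eta,\gamma}$ along an isotopy, reduce it by Stokes' theorem to the two endpoints of $\gamma$, and identify the origin contribution with $S$ and the boundary contribution with the translation number. Fix $\tilde g \in \widetilde{G_o}$ represented by a smooth path $(g_t)_{t\in[0,1]}$ from the identity, generated by $X_t$ via $\tfrac{d}{dt}g_t = X_t\circ g_t$. Since $D$ is simply connected, each symplectic $X_t$ is Hamiltonian, say $\iota_{X_t}\omega = dH_t$, and the constraint $g_t(o)=o$ forces $X_t(o)=0$. First I would differentiate: from $\tfrac{d}{dt}g_t^*\eta = g_t^*\mathcal{L}_{X_t}\eta$, Cartan's formula and $d\eta=\omega$ give
\[
  \mathcal{L}_{X_t}\eta = d\bigl(\iota_{X_t}\eta + H_t\bigr),
\]
so the integrand along $\gamma$ is exact and the derivative collapses to a difference of boundary values of the function $\bigl(\iota_{X_t}\eta + H_t\bigr)\circ g_t$.

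The two endpoints then separate cleanly. Normalizing the Hamiltonians by $H_t|_{\partial D}=0$, the boundary endpoint $b\in\partial D$ receives no contribution from $H_t$ (as $g_t(b)\in\partial D$), while at the origin the relation $X_t(o)=0$ kills $\iota_{X_t}\eta$; hence
\[
  \frac{d}{dt}\,\sigma_{\eta,\gamma}(g_t)
  = \bigl(\iota_{X_t}\eta\bigr)\bigl(g_t(b)\bigr) - H_t(o).
\]
Integrating in $t$, the term $\int_0^1 H_t(o)\,dt$ is, up to sign and normalization, precisely the homomorphism $S$ of Section~\ref{section_flux} (the pointwise analogue of the formula $R([g_t])=\int_0^1(\int_D f_{X_t}\omega)\,dt$, with $\int_D$ replaced by evaluation at the fixed point $o$). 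For the boundary term I would use the explicit primitive $\eta=\tfrac12 r^2\,d\theta$, so that $\eta|_{\partial D}=\tfrac12\,d\theta$; writing $X_t=w_t\,\partial_\theta$ along $\partial D$, the integral $\int_0^1 \bigl(\iota_{X_t}\eta\bigr)(g_t(b))\,dt$ equals one half of the total angle swept by $b$ under the boundary isotopy. A full revolution contributes $2\pi$, and this $\tfrac12\cdot 2\pi$ is exactly where the constant $\pi$ will come from.

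The final step is homogenization. Evaluating the formula on $g^n$ (using the $n$-fold iterate as a representing path, which is legitimate since $\sigma_{\eta,\gamma}(g^n)$ depends only on the endpoint) and dividing by $n$, the homomorphism $S$ scales exactly, while the total swept angle of $b$ differs from $n$ times the one-step angle only by a bounded amount and hence, after division by $n$, converges to $2\pi$ times Poincar\'{e}'s translation number of the induced lift in $\widetilde{{\rm Diff}_{+}(S^1)}$, i.e.\ to $2\pi\,\widetilde{\rm rot}(\tilde g)$. Combining, $p^*\overline{\sigma}(\tilde g) = S(\tilde g) + \pi\,\widetilde{\rm rot}(\tilde g)$, which is the claimed identity.

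I expect the main obstacle to be matching the origin contribution to $S$ \emph{on the nose}: one must commit to a definite normalization of the $H_t$ (here $H_t|_{\partial D}=0$), check that $\int_0^1 H_t(o)\,dt$ is independent of the chosen path within a fixed class in $\widetilde{G_o}$, and reconcile the resulting sign and normalization with the definition of $S$ given in Section~\ref{section_flux}. A secondary technical point is the homogenization of the boundary term: one should verify that the ``naive'' swept angle of a single point $b$ is within a uniformly bounded distance of $n$ times its one-step value, so that the limit is genuinely the translation number $\widetilde{\rm rot}$ and not merely some homogeneous quasi-morphism proportional to it; this is where the quasi-morphism bound on $\sigma_{\eta,\gamma}$ (established when $\overline{\sigma}$ is constructed) does the work.
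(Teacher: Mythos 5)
Your strategy is, in substance, the paper's own proof: the paper's lemma in Section \ref{section_flux} establishes exactly your endpoint identity (there it is derived from the integrated formula $g_1^*\eta - \eta = d\bigl(\int_0^1 g_t^*f_t\,dt + \int_0^1 g_t^*(i_{X_t}\eta)\,dt\bigr)$ rather than by differentiating in $t$, a cosmetic difference), and then, with $\eta = r^2d\theta/2$ and $\gamma(t)=(t,0)$, the boundary term is computed to be $\tfrac12\widetilde{\varphi_1}(0)$, which is your ``half the swept angle'' — this is equation (\ref{sigma_S_rot}). Your normalization worry also resolves cleanly: the paper's Hamiltonian $f_t$ (normalized by $f_t(o)=0$) and your $H_t$ (normalized by $H_t|_{\partial D}=0$) differ by the constant $f_t|_{\partial D}$, so $-\int_0^1 H_t(o)\,dt = \int_0^1 f_t(\gamma(1))\,dt = S(g_t)$ on the nose, with no stray sign. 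The homogenization is then structured as in the paper: apply the identity to the $n$-th power path (whose boundary lift is $\widetilde{\varphi_1}^n$), use that $S$ is a homomorphism, and divide by $n$.

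The one genuine flaw is the justification you give for the final limit, and the ``secondary technical point'' you propose to verify. You claim the total swept angle $\widetilde{\varphi_1}^n(0)$ stays within a uniformly bounded distance of $n$ times the one-step value $n\widetilde{\varphi_1}(0)$. This is false in general: if $\varphi_1$ has a fixed point but moves the base point $b$, then $\widetilde{\varphi_1}^n(0)$ stays bounded while $n\widetilde{\varphi_1}(0)$ grows linearly; similar linear divergence occurs for any $\varphi_1$ with $\widetilde{\varphi_1}(0) \neq 2\pi\,\widetilde{\rm rot}(\widetilde{\varphi_1})$. Worse, if the claim were true it would prove the wrong statement: dividing by $n$ would yield the one-step angle $\widetilde{\varphi_1}(0)$ as the limit, which is not $2\pi\,\widetilde{\rm rot}(\widetilde{\varphi_1})$ in general, so the inference is internally inconsistent. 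Fortunately nothing of the sort is needed: the swept angle under the $n$-fold iterate is exactly $\widetilde{\varphi_1}^n(0)$, and the convergence $\lim_{n\to\infty}\widetilde{\varphi_1}^n(0)/(2\pi n) = \widetilde{\rm rot}(\widetilde{\varphi_1})$ is literally the definition of the translation number used in the paper; the quasi-morphism bound on $\sigma_{\eta,\gamma}$ plays no role at this point (what does the work is the homomorphism property $S(\tilde g^n)=nS(\tilde g)$, which you invoke correctly). Delete the bounded-distance claim, quote the definition of $\widetilde{\rm rot}$, and your argument coincides with the paper's.
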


The coboundary of the translation number $\widetilde{\rm rot}$
gives the canonical Euler cocycle (Matsumoto \cite{matsumoto86}).
Similarly, the coboundary of homogeneous quasi-morphisms
$\overline{\tau}$ and $\overline{\sigma}$
also give cocycles which represents the bounded Euler class
of ${\rm Diff}_+(S^1)$
(Propositions \ref{tau_and_bdd_euler} and \ref{sigma_bar}).

By comparing the two homogeneous quasi-morphisms
$\overline{\tau}$ and $\overline{\sigma}$,
we obtain the following theorem.
\begin{theo}[Theorem \ref{main_thm_3}]
  The difference
  $\overline{\tau} - \pi \overline{\sigma}: G_o \to \mathbb{R}$
  is a continuous surjective homomorphism.
\end{theo}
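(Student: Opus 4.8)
The plan is to exploit the fact that the specific combination $\overline{\tau} - \pi\overline{\sigma}$ is rigged so as to cancel the rotation--number part of the two quasi-morphisms, leaving behind a genuine homomorphism. First I would pull everything back to the universal covering group $\widetilde{G_o}$ and combine the two preceding theorems along the natural map $\widetilde{G_o}\to\widetilde{G}$. Restricting Theorem \ref{main_thm_1_intro} gives $p^*\overline{\tau} = \pi^2\widetilde{\mathrm{rot}} - 2R$, while Theorem \ref{main_thm_2} gives $p^*\overline{\sigma} = S + \pi\widetilde{\mathrm{rot}}$. Multiplying the latter by $\pi$ and subtracting, the $\pi^2\widetilde{\mathrm{rot}}$ terms cancel exactly and I obtain
\[
  p^*(\overline{\tau} - \pi\overline{\sigma}) = -2R - \pi S : \widetilde{G_o} \to \mathbb{R}.
\]
Since $R$ and $S$ are homomorphisms, the right-hand side is a homomorphism. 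To descend this to $G_o$, I would take $g,h\in G_o$, choose lifts $\tilde{g},\tilde{h}\in\widetilde{G_o}$, note that $\tilde{g}\tilde{h}$ lifts $gh$, and combine $(\overline{\tau}-\pi\overline{\sigma})\circ p = -2R-\pi S$ with the additivity of the right-hand side to conclude that $\overline{\tau}-\pi\overline{\sigma}$ is a homomorphism on $G_o$. The coefficient $\pi$ is forced precisely by this cancellation of $\widetilde{\mathrm{rot}}$, which is the conceptual heart of the statement.

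For continuity I would argue upstairs on the cover. The homomorphisms $R$ and $S$ are defined by integrals of Hamiltonian functions and flux data and depend continuously on the diffeomorphism, so $-2R-\pi S$ is continuous on $\widetilde{G_o}$. Being a pullback $p^*(\overline{\tau}-\pi\overline{\sigma})$, it is automatically invariant under the deck transformation group of the covering $p$. A deck-invariant continuous function on $\widetilde{G_o}$ descends to a continuous function on the quotient $G_o$, and that descended function is exactly $\overline{\tau}-\pi\overline{\sigma}$; hence it is continuous. It is essential to proceed this way: homogeneous quasi-morphisms are in general discontinuous, so one cannot argue continuity of $\overline{\tau}$ and $\overline{\sigma}$ separately, and the argument genuinely uses that the difference is a homomorphism agreeing with $-2R-\pi S$.

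For surjectivity I would restrict to $G_{\mathrm{rel}}\cap G_o$ and use Hamiltonian flows supported in a small annulus $A\subset D^{\circ}$ separating the origin from the boundary. The time-one map $g=\phi^H_1$ of such an $H$ lies in $G_{\mathrm{rel}}\cap G_o$, acts trivially on $\partial D$ (so $\widetilde{\mathrm{rot}}(\tilde{g})=0$), and is Hamiltonian inside $G_o$ (generated by an $H$ with $dH(o)=0$); since $S$ is a flux-type homomorphism vanishing on Hamiltonian isotopies, $S(\tilde{g})=0$. The displayed identity then gives $(\overline{\tau}-\pi\overline{\sigma})(g) = -2R(\tilde{g})$, a nonzero constant multiple of $\int_D H\,\omega$ (equivalently, on this subgroup $\overline{\tau}=\mathrm{Cal}$ and $\overline{\sigma}=0$, so the homomorphism restricts to the Calabi invariant). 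Rescaling $H$ makes $\int_D H\,\omega$ attain every real value, so the homomorphism is onto $\mathbb{R}$.

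The step I expect to be the main obstacle is continuity, together with the bookkeeping of constants. Once the identity $p^*(\overline{\tau}-\pi\overline{\sigma})=-2R-\pi S$ is in hand, the homomorphism and surjectivity assertions are comparatively formal; but asserting continuity requires knowing that $R$ and $S$ are continuous in the topology of $\widetilde{G_o}$ and that the covering projection transports continuity to the quotient, and verifying the continuity of the flux homomorphism $S$ in the relevant function-space topology is the delicate point. A secondary care is tracking the exact constants so that the $\pi^2\widetilde{\mathrm{rot}}$ contributions cancel precisely, which is what pins down the coefficient $\pi$ in $\overline{\tau}-\pi\overline{\sigma}$.
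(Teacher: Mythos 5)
Your argument is correct, but it takes a genuinely different route from the paper's. For the homomorphism property, the paper never passes to the universal cover: in the paragraph preceding the theorem it invokes Propositions \ref{tau_and_bdd_euler} and \ref{sigma_bar} to get $[\delta \overline{\tau}] = \pi[\delta \overline{\sigma}] = -\pi^2\rho^*e_b$ in $H_b^2(G_o;\mathbb{R})$, so that $\overline{\tau}-\pi\overline{\sigma}$ is a homogeneous quasi-morphism within bounded distance of a homomorphism and hence equal to it. Your cancellation of the $\pi^2\widetilde{\rm rot}$ terms using Theorems \ref{main_thm_1} and \ref{main_thm_2}, followed by descent along the surjective homomorphism $p$, is a valid alternative; it buys the explicit formula $p^*(\overline{\tau}-\pi\overline{\sigma}) = -2R-\pi S$, which the bounded-cohomology argument does not produce, and it is the natural vehicle for the continuity claim (which, notably, the paper's own proof never addresses at all --- it only proves surjectivity). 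For surjectivity, the paper computes by hand with the twist maps $g_s(r,\theta)=(r,\theta+sf(r))$, where $f\equiv 1$ near $r=0$ and $f(1)=0$; these have \emph{both} ${\rm Cal}(g_s)\neq 0$ and ${\rm Flux}_{\mathbb{R}}(g_s)\neq 0$, and one checks the difference is a nonzero multiple of $s$. Your annulus-supported Hamiltonians, for which $\overline{\sigma}$ vanishes and the homomorphism reduces to the Calabi invariant, reach the same conclusion more conceptually and with less computation.

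Two points in your write-up need repair, though neither is fatal. First, the justification ``$S$ is a flux-type homomorphism vanishing on Hamiltonian isotopies'' is not a correct principle in this setting: since $H^1(D)=0$, \emph{every} symplectic isotopy of the disk is Hamiltonian, so that principle would force $S\equiv 0$, which is false. The correct reason your maps satisfy $S(\tilde g)=0$ is concrete: $S(g_t)=\int_0^1 f_t(\gamma(1))\,dt$ with the normalization $f_t(o)=0$, and an $H$ supported in an annulus separating $o$ from $\partial D$ satisfies $H(o)=0$ and $H(\gamma(1))=0$, so $f_t=H$ and the integral vanishes. Second, your continuity argument rests on the assertion that $R$ and $S$ are continuous on $\widetilde{G_o}$, which you flag but do not prove; since the paper offers no continuity argument whatsoever, your sketch is no less complete than the original, but it remains a sketch rather than a proof.
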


Note that, in this paper,
we assume the notation of group cohomology and bounded cohomology in
\cite{calegari09}.


\subsection*{Acknowledgements}
The author would like to thank Professor Hitoshi Moriyoshi for
his helpful advice.
He also thanks Morimichi Kawasaki, who told him that there is another
extension $R$ of the Calabi invariant and suggested to investigate
a connection between $R$ and the quasi-morphism $\overline{\tau}$
constructed in this paper.
He also thanks Professor Masayuki Asaoka for his comments.

\section{The Calabi invariant case}\label{section_tau_R}
\subsection{Calabi invariant and the quasi-morphism $\overline{\tau}$}
Let $D = \{ (x, y) \in \mathbb{R}^2 \mid x^2 + y^2 \leq 1 \}$
be the unit disk with the standard symplectic form
$\omega = dx \wedge dy$.
Let $G = {\rm Symp}(D)$ denote the symplectomorphism group of $D$
and ${\rm Diff}_{+}(S^1)$
the orientation preserving diffeomorphism group
of the unit circle $S^1 = \partial D$.
Then the homomorphism
$\rho : G \to {\rm Diff}_{+}(S^1)$
is surjective(see Tsuboi\cite{tsuboi00}).
Thus we have an exact sequence
\[
  1 \longrightarrow G_{\rm rel} \longrightarrow G
  \overset{\rho}{\longrightarrow} {\rm Diff}_{+}(S^1) \longrightarrow 1,
\]
where the group $G_{\rm rel}$ is the kernel of the map
$\rho : G \to {\rm Diff}_{+}(S^1)$.

The Calabi invariant ${\rm Cal} : G_{\rm rel} \to \mathbb{R}$ is defined by
\begin{equation}\label{def_calabi}
  {\rm Cal}(h)
  = \int_{D} h^*\eta \wedge \eta
\end{equation}
where $\eta$ is a $1$-form satisfying $d\eta = \omega$.
The Calabi invariant
${\rm Cal}$ is a surjective homomorphism
and is independent of the choice of $\eta$ (see Banyaga\cite{banyaga78}).
On the group $G$,
the map $\tau_{\eta} : G \to \mathbb{R}$ is defined in the same way
as in (\ref{def_calabi}), that is, we put
\[
  \tau_{\eta}(g) = \int_{D} g^*\eta \wedge \eta.
\]
Note that the map $\tau_{\eta}$ is {\it not} a homomorphism and
{\it does} depend on the choice of $\eta$.
In \cite{moriyoshi16}, for $\lambda = (xdy - ydx)/2$, Moriyoshi proved
the transgression formula
\begin{align}\label{transgression}
  {\rm Cal}(h) &= \tau_{\lambda}(h)
  \hspace{5mm} (h \in G_{\rm rel}) \nonumber \\
  - \delta \tau_{\lambda}(g, h) &=
  \pi^2 \chi(\rho(g), \rho(h)) + \pi^2/2 \hspace{5mm} (g, h \in G).
\end{align}
Here $\delta$ is the coboundary operator
of group cohomology and
the symbol $\chi$ is a bounded $2$-cocycle defined in
Moriyoshi\cite{moriyoshi16},
which represents the bounded Euler class
$e_b \in H_b^2({\rm Diff}_{+}(S^1);\mathbb{R})$.
Since the cocycle $\chi$ is bounded,
the map $\tau_{\lambda} : G \to \mathbb{R}$ is a quasi-morphism.
Moreover, since the function $\tau_{\eta} - \tau_{\lambda}$ is
bounded
for any $1$-form $\eta$ satisfying
$d\eta = \omega$, the map $\tau_{\eta}$ is a quasi-morphism
for any $\eta$ and the homogenizations of
$\tau_{\eta}$ and $\tau_{\lambda}$ coincide.
Thus we simply denote by $\overline{\tau}$
the homogenization of $\tau_{\eta}$.

\begin{prop}\label{tau_surj}
  The homogenization $\overline{\tau} : G \to \mathbb{R}$ is an
  extension of the Calabi invariant,
  that is, $\overline{\tau}|_{G_{\rm rel}} = {\rm Cal}$.
  In particular, the map $\overline{\tau}$ is a
  surjective
  homogeneous quasi-morphism.
\end{prop}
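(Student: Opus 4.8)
The plan is to show first that $\overline{\tau}$ restricts to ${\rm Cal}$ on $G_{\rm rel}$, and then to deduce surjectivity and the quasi-morphism property from facts already established in the excerpt. For the restriction, I would exploit the transgression formula (\ref{transgression}). Its first line already gives ${\rm Cal}(h) = \tau_{\lambda}(h)$ for $h \in G_{\rm rel}$, so it suffices to prove that $\tau_{\lambda}$ \emph{already agrees with its own homogenization} on $G_{\rm rel}$. For this I would show that $\tau_{\lambda}|_{G_{\rm rel}}$ is a homomorphism: indeed ${\rm Cal}$ is a homomorphism on $G_{\rm rel}$, and since $\tau_{\lambda}$ and ${\rm Cal}$ coincide on $G_{\rm rel}$, the function $\tau_{\lambda}|_{G_{\rm rel}} = {\rm Cal}$ is genuinely a homomorphism $G_{\rm rel} \to \mathbb{R}$. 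A homogeneous quasi-morphism that is a homomorphism equals its own homogenization, so $\overline{\tau_{\lambda}}(h) = \tau_{\lambda}(h) = {\rm Cal}(h)$ for all $h \in G_{\rm rel}$.

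The subtle point I must be careful about is that homogenization is computed by taking powers in the \emph{ambient} group $G$, not in $G_{\rm rel}$. So to compute $\overline{\tau}(h) = \lim_{n\to\infty} \tau_{\lambda}(h^n)/n$ for $h \in G_{\rm rel}$, I need that $h^n$ again lies in $G_{\rm rel}$ (true, since $G_{\rm rel}$ is a subgroup) and that $\tau_{\lambda}(h^n) = n\,{\rm Cal}(h)$. The latter follows from the homomorphism property: $\tau_{\lambda}(h^n) = {\rm Cal}(h^n) = n\,{\rm Cal}(h)$, whence $\overline{\tau}(h) = {\rm Cal}(h)$. I expect this bookkeeping—verifying that the limit defining the homogenization may be evaluated using the homomorphism identity on the subgroup—to be the only place demanding genuine care; the rest is formal. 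Note also that the independence of $\overline{\tau}$ from the choice of $\eta$ is already recorded before the statement, so no separate argument is needed there.

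For the final sentence, the quasi-morphism property of $\overline{\tau}$ is immediate from the discussion preceding the proposition: $\tau_{\lambda}$ is a quasi-morphism because its coboundary is the bounded cocycle $\pi^2\chi + \pi^2/2$ (by (\ref{transgression})), and homogenizations of quasi-morphisms are again (homogeneous) quasi-morphisms. Surjectivity of $\overline{\tau}$ then follows at once from the restriction result: since ${\rm Cal} : G_{\rm rel} \to \mathbb{R}$ is surjective and $\overline{\tau}|_{G_{\rm rel}} = {\rm Cal}$, the map $\overline{\tau} : G \to \mathbb{R}$ is surjective as well. This yields both assertions of the proposition.
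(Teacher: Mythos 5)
Your proposal is correct and follows essentially the same route as the paper: the paper's proof is exactly the computation $\overline{\tau}(h) = \lim_{n\to\infty}\tau_{\eta}(h^n)/n = \lim_{n\to\infty}{\rm Cal}(h^n)/n = {\rm Cal}(h)$ for $h \in G_{\rm rel}$, using that powers of $h$ stay in the subgroup $G_{\rm rel}$ where $\tau$ agrees with the homomorphism ${\rm Cal}$, followed by the observation that surjectivity of ${\rm Cal}$ gives surjectivity of $\overline{\tau}$. The bookkeeping point you flag (powers taken in the ambient group still lie in $G_{\rm rel}$) is precisely what makes the paper's one-line computation valid, so your extra care is well placed but yields the identical argument.
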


\begin{proof}
  For $h \in G_{\rm rel}$, we have
  \[
    \overline{\tau}(h) = \lim_{n \to \infty}\frac{\tau_{\eta}(h^n)}{n}
    = \lim_{n \to \infty}\frac{{\rm Cal}(h^n)}{n}
    = \lim_{n \to \infty}\frac{n{\rm Cal}(h)}{n}
    = {\rm Cal}(h).
  \]
  Since the Calabi invariant is surjective, the homogenization
  $\overline{\tau}$ is also surjective.
\end{proof}

The homogeneous quasi-morphism
$\overline{\tau}$ relates to the bounded Euler class as follows.
\begin{prop}\label{tau_and_bdd_euler}
  The bounded cohomology class $[\delta \overline{\tau}] \in H_b^2(G;\mathbb{R})$
  is equal to $-\pi^2$ times
  the pullback $\rho^* e_b$ of the bounded Euler class
  $e_b$.
\end{prop}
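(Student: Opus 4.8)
The plan is to transport Moriyoshi's transgression formula (\ref{transgression}) into bounded cohomology, and then replace $\tau_\lambda$ by its homogenization $\overline{\tau}$ at the level of bounded cohomology classes. Since both steps only discard cochains that are \emph{bounded}, the resulting identity will hold in $H_b^2(G;\mathbb{R})$ and not merely in ordinary $H^2$.

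First I would recall that, as noted just before Proposition \ref{tau_surj}, the map $\overline{\tau}$ is exactly the homogenization of $\tau_\lambda$. The one standard fact I would cite about homogenization is the defect estimate: a homogeneous quasi-morphism stays within bounded distance of the quasi-morphism it homogenizes, so that $|\overline{\tau}(g) - \tau_\lambda(g)| \le D(\tau_\lambda)$ for all $g \in G$, where $D(\tau_\lambda)$ denotes the defect of $\tau_\lambda$. Consequently $\overline{\tau} - \tau_\lambda$ is a bounded function on $G$, and $\delta(\overline{\tau} - \tau_\lambda)$ is the coboundary of a bounded $1$-cochain. This gives $[\delta\overline{\tau}] = [\delta\tau_\lambda]$ in $H_b^2(G;\mathbb{R})$, reducing the proposition to computing $[\delta\tau_\lambda]$.

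Next I would invoke the second line of (\ref{transgression}), rewritten as the equality of bounded $2$-cochains $\delta\tau_\lambda(g,h) = -\pi^2\chi(\rho(g),\rho(h)) - \pi^2/2$. The summand $\pi^2/2$ is a constant $2$-cocycle, and any constant $2$-cocycle is the coboundary of the corresponding constant $1$-cochain, which is bounded; hence it vanishes in $H_b^2(G;\mathbb{R})$. The surviving term is the pullback $-\pi^2\rho^*\chi$, so $[\delta\tau_\lambda] = -\pi^2[\rho^*\chi] = -\pi^2\rho^*[\chi]$. Since $\chi$ represents the bounded Euler class, $[\chi] = e_b$, and combining with the previous paragraph yields $[\delta\overline{\tau}] = -\pi^2\rho^* e_b$, as claimed.

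I do not expect a genuine obstacle: the argument is essentially bookkeeping on top of the transgression formula. The only point that needs care — and the place where the statement could fail if handled carelessly — is that each cochain being thrown away ($\overline{\tau} - \tau_\lambda$ and the constant $\pi^2/2$) must be verified to be bounded, since that boundedness is precisely what upgrades the equalities from $H^2(G;\mathbb{R})$ to $H_b^2(G;\mathbb{R})$. The defect estimate for homogenization supplies exactly this for the first term, and the triviality of bounded constant cocycles handles the second.
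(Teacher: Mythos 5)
Your proposal is correct and follows essentially the same route as the paper: replace $\overline{\tau}$ by $\tau_\lambda$ using the boundedness of $\overline{\tau}-\tau_\lambda$, then apply the transgression formula (\ref{transgression}) to identify $[\delta\tau_\lambda]$ with $-\pi^2\rho^*e_b$ in $H_b^2(G;\mathbb{R})$. Your explicit disposal of the constant term $\pi^2/2$ (as the coboundary of a bounded constant $1$-cochain) is a detail the paper glosses over with ``up to non-zero constant multiple,'' but it is the same argument.
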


\begin{proof}
  Recall that the difference between a quasi-morphism and its
  homogenization is a bounded function.
  Thus we have $\delta \tau_{\lambda} - \delta \overline{\tau} = \delta b$
  where $b = \tau_{\lambda} - \overline{\tau}$ is a bounded function.
  This implies that the bounded cohomology class
  $[\delta \tau_{\eta}]$ coincides with $[\delta \overline{\tau}]$.
  Moreover, the class $[\delta \tau_{\lambda}]$ is equal to the
  pullback $\rho^* e_b$ up to non-zero constant multiple
  because of the transgression formula
  (\ref{transgression}).
\end{proof}

\subsection{Two extensions $\overline{\tau}$ and $R$ of the Calabi invariant}
By Proposition \ref{tau_surj}, the homogeneous quasi-morphism
$\overline{\tau}: G \to \mathbb{R}$ is considered as
an extension of the Calabi invariant.
There is another extension $R$ of the Calabi invariant,
which is introduced by Tsuboi\cite{tsuboi00}
(see also Banyaga\cite{banyaga78}).
This extension is defined as
a homomorphism $R : \widetilde{G} \to \mathbb{R}$,
where the group $\widetilde{G}$ is the universal covering group of
$G$ with respect to the $C^{\infty}$-topology.
In this section, we investigate the relation between these
two extensions $\overline{\tau}$ and $R$.

We recall the definition of the homomorphism $R$.
Let $\mathcal{L}_{\omega}(D)$ be the set of divergence free vector
fields which are tangent to the boundary.
For any vector field $X$ in $\mathcal{L}_{\omega}(D)$,
there is a unique function $f_X : D \to \mathbb{R}$
such that $i_X \omega = df_X$ and $f_X|_{\partial D} = 0$.
For any path $g_t$ in $G$, we define the time-dependent vector field
$X_t$ by $X_t = (\partial g_t / \partial t)\circ g_t^{-1}$.
Since $g_t$ is a symplectomorphism for any $t \in [0,1]$,
the vector field $X_t$ is
in $\mathcal{L}_{\omega}(D)$.
Then the map $R:\widetilde{G} \to \mathbb{R}$ is defined
by
\[
  R([g_t]) = \int_0^1 \left( \int_D f_{X_t}\omega \right) dt.
\]
This map $R$ is a well-defined homomorphism
(see Banyaga\cite{banyaga78}).

We reproduce the following lemma,
which is essentially proved in
Tsuboi\cite[Lemme 1.5]{tsuboi00}.
\begin{lemma}
  Let $g_t$ be a path in $G$ such that $g_0 = {\rm id}$ and $X_t$
  the time-dependent vector field defined by
  $X_t = (\partial g_t / \partial t)\circ g_t^{-1}$,
  then
  \begin{equation}\label{calabi_r_rot}
    \tau_{\eta}(g_1) + 2R([g_t])
    =
    \int_{\partial D} \left( \int_0^1 g_t^* (i_{X_t} \eta) dt \right)
    \eta.
  \end{equation}
  In particular, for a path $h_t$ in $G_{\rm rel}$ such that
  $h_0 = {\rm id}$,
  we have ${\rm Cal}(h_1) = -2R([h_t])$.
\end{lemma}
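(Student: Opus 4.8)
The plan is to differentiate $\Phi(t):=\tau_{\eta}(g_t)=\int_D g_t^*\eta\wedge\eta$ in $t$, integrate from $0$ to $1$, and match the resulting pieces with $R$ and the boundary term. Using the standard identity $\frac{d}{dt}g_t^*\eta=g_t^*(\mathcal{L}_{X_t}\eta)$ for the Lie derivative $\mathcal{L}_{X_t}$, together with Cartan's formula and the relations $d\eta=\omega$ and $i_{X_t}\omega=df_{X_t}$, I would first rewrite $\mathcal{L}_{X_t}\eta=d(i_{X_t}\eta)+i_{X_t}\omega=d(i_{X_t}\eta+f_{X_t})$. Setting $u_t=i_{X_t}\eta+f_{X_t}$, this gives $\frac{d}{dt}g_t^*\eta=d(g_t^*u_t)$, and hence $\Phi'(t)=\int_D d(g_t^*u_t)\wedge\eta$.

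Next I would split this into boundary and interior contributions. From $d(g_t^*u_t)\wedge\eta=d((g_t^*u_t)\eta)-(g_t^*u_t)\omega$ and Stokes' theorem, $\Phi'(t)=\int_{\partial D}(g_t^*u_t)\eta-\int_D(g_t^*u_t)\omega$. Because $g_t$ is a symplectomorphism, $g_t^*\omega=\omega$, so by change of variables $\int_D(g_t^*u_t)\omega=\int_D u_t\omega$. On $\partial D$ the normalization $f_{X_t}|_{\partial D}=0$ together with $g_t(\partial D)=\partial D$ gives $(g_t^*u_t)|_{\partial D}=g_t^*(i_{X_t}\eta)|_{\partial D}$. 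Therefore $\Phi'(t)=\int_{\partial D}g_t^*(i_{X_t}\eta)\,\eta-\int_D(i_{X_t}\eta)\omega-\int_D f_{X_t}\omega$.

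The heart of the argument is the identity $\int_D(i_{X_t}\eta)\omega=\int_D f_{X_t}\omega$, which is exactly what converts the single $-\int_D f_{X_t}\omega$ into the factor $2R$ appearing in the statement. For this I would use the pointwise relation $(i_{X_t}\eta)\omega=\eta\wedge(i_{X_t}\omega)$, which holds in dimension two because $\eta\wedge\omega$ is a $3$-form and thus vanishes identically. Substituting $i_{X_t}\omega=df_{X_t}$ and using the Leibniz rule $d(f_{X_t}\eta)=df_{X_t}\wedge\eta+f_{X_t}\omega$, one finds $(i_{X_t}\eta)\omega=f_{X_t}\omega-d(f_{X_t}\eta)$; integrating over $D$ and discarding the Stokes boundary term (again by $f_{X_t}|_{\partial D}=0$) yields the claimed equality. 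I expect this to be the main obstacle, since it is the only place where both the two-dimensionality of $D$ and the boundary normalization of $f_{X_t}$ are genuinely used.

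Finally I would integrate $\Phi'(t)$ over $[0,1]$. Since $g_0={\rm id}$ gives $\Phi(0)=\int_D\eta\wedge\eta=0$, we obtain
\[
  \tau_{\eta}(g_1)=\int_{\partial D}\left(\int_0^1 g_t^*(i_{X_t}\eta)\,dt\right)\eta-2\int_0^1\left(\int_D f_{X_t}\omega\right)dt,
\]
and the last term is $2R([g_t])$ by definition, which is (\ref{calabi_r_rot}). For the final assertion, a path $h_t$ in $G_{\rm rel}$ is the identity on $\partial D$, so $X_t|_{\partial D}=0$ and the boundary integral vanishes; combined with $\tau_{\eta}(h_1)={\rm Cal}(h_1)$ for $h_1\in G_{\rm rel}$ (which is the definition (\ref{def_calabi}) of the Calabi invariant), this gives ${\rm Cal}(h_1)=-2R([h_t])$.
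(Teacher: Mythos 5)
Your proof is correct and complete. One thing to note up front: the paper itself contains no argument for this lemma --- it is stated as being ``essentially proved in Tsuboi [Lemme 1.5]'' and left at that --- so there is no in-paper proof to compare against; your write-up supplies a self-contained one. Every step checks: the formula $\frac{d}{dt}g_t^*\eta = g_t^*(\mathcal{L}_{X_t}\eta)$, Cartan's formula giving $\frac{d}{dt}g_t^*\eta = d(g_t^*u_t)$ with $u_t = i_{X_t}\eta + f_{X_t}$, the Stokes/change-of-variables split (using $g_t^*\omega=\omega$, $g_t(\partial D)=\partial D$, and $f_{X_t}|_{\partial D}=0$), and the two-dimensional identity $(i_{X}\eta)\,\omega = \eta\wedge i_X\omega = f_X\omega - d(f_X\eta)$, which upon integration gives $\int_D (i_X\eta)\,\omega = \int_D f_X\,\omega$ and is exactly what produces the factor $2$ in front of $R$. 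Your route is, moreover, parallel to the argument the paper \emph{does} spell out for the flux analogue in Section 3: there the author uses the integrated form $g_1^*\eta - \eta = d\left(\int_0^1 g_t^*(f_t + i_{X_t}\eta)\,dt\right)$ of your identity $\frac{d}{dt}g_t^*\eta = d(g_t^*u_t)$, pairing it against a path $\gamma$ instead of wedging with $\eta$ and integrating over $D$; the extra ingredient needed in the Calabi case, absent from the flux case, is precisely the identity $\int_D(i_X\eta)\,\omega=\int_D f_X\,\omega$, which you identified correctly as the heart of the matter. The final deduction of ${\rm Cal}(h_1)=-2R([h_t])$, from $X_t|_{\partial D}=0$ for a path in $G_{\rm rel}$ together with the definition of ${\rm Cal}$, is also correct.
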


Let $\widetilde{{\rm Diff}_{+}(S^1)}$ denote the universal
covering of ${\rm Diff}_{+}(S^1)$.
Note that , in this paper, we identify the circle $S^1$
with the quotient $\mathbb{R}/2\pi \mathbb{Z}$.
We consider an element
$\widetilde{\gamma} \in \widetilde{{\rm Diff}_{+}(S^1)}$
as an orientation preserving
diffeomorphism of $\mathbb{R}$ satisfying
$\widetilde{\gamma}(\theta + 2\pi) = \widetilde{\gamma}(\theta) + 2\pi$
for any $\theta \in \mathbb{R}$.
Let $\varphi_t$ be the path in ${\rm Diff}_{+}(S^1)$ defined by
$\varphi_t = g_t|_{\partial D}$.
Let $\xi_t$ be the time-dependent vector field defined by
$\xi_t = (\partial \varphi_t / \partial t)\circ \varphi_t^{-1}$.
Let $\widetilde{\varphi_t} \in \widetilde{{\rm Diff}_{+}(S^1)}$
be the lift of $\varphi_t$ such that $\widetilde{\varphi_0} = {\rm id}$.
Note that $\lambda = (xdy - ydx)/2 = (r^2 d\theta)/2$
where $(r, \theta) \in D$ is the polar coordinates.
Then the right-hand side of the equality (\ref{calabi_r_rot})
can be written as
\begin{align}\label{i_X_t_eta_integral}
  \int_{\partial D} \left( \int_0^1 g_t^* (i_{X_t} \lambda) dt \right)
  \lambda
  &=
  \frac{1}{4}\int_{S^1}\left( \int_0^1 \varphi_t^*
  (i_{\xi_t}d\theta)dt \right) d\theta\\ \nonumber
  &=
  \frac{1}{4}\int_0^{2\pi} \left(
  \int_0^1 \frac{\partial \widetilde{\varphi_t}}{\partial t}
  dt \right) d\theta
  =
  \frac{1}{4}\int_0^{2\pi} (\widetilde{\varphi_1}(\theta) - \theta)
  d\theta. \nonumber
\end{align}
Let us define a map $f : \widetilde{{\rm Diff}_{+}(S^1)} \to \mathbb{R}$
by $f(\widetilde{\varphi})
  =
  \frac{1}{4\pi^2}\int_0^{2\pi} (\widetilde{\varphi}(\theta) - \theta)
  d\theta$.
Then we have
\begin{equation}\label{tau_R_rho}
  \tau_{\lambda}(g_1) + 2R([g_t]) =
  \pi^2f(\widetilde{\varphi_1}).
\end{equation}
Note that, for any $\widetilde{\varphi}, \widetilde{\psi}$ in
$\widetilde{{\rm Diff}_{+}(S^1)}$, the inequality
$|\widetilde{\varphi}\widetilde{\psi}(\theta)
- \widetilde{\varphi}(\theta) - \widetilde{\psi}(\theta) + \theta|
< 4\pi$
holds.
This implies that the map $f$ is a quasi-morphism.
Let $\overline{f}$
be the homogenization of $f$.
By taking the homogenizations of the both sides of the
equality (\ref{tau_R_rho}),
we have
\begin{equation}\label{tau_R_rho_bar}
  \overline{\tau}(g_1) + 2R([g_t])
  =
  \pi^2\overline{f}(\widetilde{\varphi_1}).
\end{equation}

To explain the map
$\overline{f} : \widetilde{{\rm Diff}_{+}(S^1)} \to \mathbb{R}$,
we recall the translation number introduced by Poincar\'{e}\cite{poincare85}.
The translation number is a homogeneous quasi-morphism
$\widetilde{\rm rot} : \widetilde{{\rm Diff}_{+}(S^1)} \to \mathbb{R}$
defined by
$\widetilde{\rm rot}(\widetilde{\varphi})
=\displaystyle
\lim_{n\to \infty}\frac{\widetilde{\varphi}^n(0)}{2\pi n}$.
Note that, in this paper, we identify the circle $S^1$ with the quotient
$\mathbb{R}/2\pi \mathbb{Z}$.

\begin{prop}\label{rho_translation}
  The homogeneous quasi-morphism
  $\overline{f} : \widetilde{{\rm Diff}_{+}(S^1)} \to \mathbb{R}$
  coincides with the translation number.
\end{prop}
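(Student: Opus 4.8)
The plan is to evaluate the homogenization $\overline{f}$ straight from its definition and match it term by term with Poincar\'{e}'s translation number. The guiding observation is that the averaged quantity $f(\widetilde{\varphi}^n)$ differs from the orbit quantity $\widetilde{\varphi}^n(0)/2\pi$ only by a constant plus an error that stays bounded as $n \to \infty$; once this is established, dividing by $n$ and passing to the limit collapses both $\overline{f}$ and $\widetilde{\rm rot}$ to the same value. Concretely, by definition
\[
  \overline{f}(\widetilde{\varphi})
  = \lim_{n \to \infty} \frac{f(\widetilde{\varphi}^n)}{n}
  = \lim_{n \to \infty} \frac{1}{4\pi^2 n}
    \int_0^{2\pi} \bigl( \widetilde{\varphi}^n(\theta) - \theta \bigr)\, d\theta,
\]
and the goal is to show this agrees with $\lim_{n\to\infty} \widetilde{\varphi}^n(0)/(2\pi n)$.

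The key geometric input is that each lift $\widetilde{\varphi}^n$ is an increasing function commuting with the translation $\theta \mapsto \theta + 2\pi$. Hence for every $\theta \in [0, 2\pi]$ one has $\widetilde{\varphi}^n(0) \leq \widetilde{\varphi}^n(\theta) \leq \widetilde{\varphi}^n(0) + 2\pi$, so that $0 \leq \widetilde{\varphi}^n(\theta) - \widetilde{\varphi}^n(0) \leq 2\pi$. I would use this to replace $\widetilde{\varphi}^n(\theta)$ inside the integral by the constant $\widetilde{\varphi}^n(0)$: writing $\int_0^{2\pi}\widetilde{\varphi}^n(\theta)\,d\theta = 2\pi\,\widetilde{\varphi}^n(0) + E_n$, the bound just noted gives $0 \leq E_n \leq 4\pi^2$. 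Combined with $\int_0^{2\pi}\theta\,d\theta = 2\pi^2$, a short computation yields
\[
  \left| f(\widetilde{\varphi}^n) - \frac{\widetilde{\varphi}^n(0)}{2\pi} + \frac{1}{2} \right|
  = \frac{|E_n|}{4\pi^2} \leq 1 .
\]

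Finally, dividing by $n$ and letting $n \to \infty$, both the bounded error $E_n/(4\pi^2 n)$ and the constant $1/(2n)$ vanish, so
\[
  \overline{f}(\widetilde{\varphi})
  = \lim_{n \to \infty} \frac{\widetilde{\varphi}^n(0)}{2\pi n}
  = \widetilde{\rm rot}(\widetilde{\varphi}),
\]
which is the claim. I do not anticipate a genuine obstacle; the only step that truly carries the argument is the uniform estimate $|\widetilde{\varphi}^n(\theta) - \widetilde{\varphi}^n(0)| \leq 2\pi$, since this is exactly what lets one pass from the $\theta$-averaged expression defining $f$ to the single-orbit expression defining $\widetilde{\rm rot}$. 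An alternative route would note that $\overline{f}$ and $\widetilde{\rm rot}$ are both homogeneous quasi-morphisms and that they differ by a bounded (hence, after homogenizing, trivial) amount, but the direct estimate above is both shorter and more transparent about where the normalizing constant $1/(4\pi^2)$ comes from.
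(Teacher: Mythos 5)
Your proof is correct and follows essentially the same route as the paper: both arguments reduce $\overline{f}$ to $\widetilde{\rm rot}$ by observing that, over a fundamental domain $[0,2\pi]$, the values $\widetilde{\varphi}^n(\theta)$ stay within $2\pi$ of $\widetilde{\varphi}^n(0)$, so the $\theta$-average and the orbit of $0$ have the same linear growth rate. The only difference is cosmetic: the paper phrases this as uniform convergence of $(\widetilde{\varphi}^n(x)-x)/n$ to the constant $\lim_n \widetilde{\varphi}^n(0)/n$ and exchanges limit and integral, while you make the underlying monotonicity-plus-periodicity estimate $0 \leq \widetilde{\varphi}^n(\theta)-\widetilde{\varphi}^n(0) \leq 2\pi$ explicit and track the error term directly.
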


\begin{proof}
  Since the sequence
  $\left\{ \frac{\widetilde{\varphi}^n(x) - x}{n} \right\}_{n}$
  converges uniformly to the constant function
  $\displaystyle \lim_{n \to \infty} \widetilde{\varphi}^n(0)/n$
  on the interval $[0,2\pi]$,
  We have
  \[
    \overline{f}(\widetilde{\varphi})
    =
    \frac{1}{4\pi^2}\lim_{n\to \infty}
    \int_0^{2\pi} \frac{\widetilde{\varphi}^n(x) - x}{n}dx
    =
    \frac{1}{4\pi^2}\int_0^{2\pi}
    \lim_{n\to \infty}\frac{\widetilde{\varphi}^n(0)}{n}dx
    = \widetilde{\rm rot}(\widetilde{\varphi}).
  \]
\end{proof}

By Proposition \ref{rho_translation} and
equality (\ref{tau_R_rho_bar}),
we obtain the following theorem.
\begin{theo}\label{main_thm_1}
  Let $p: \widetilde{G} \to G$ be the projection.
  Then we have
  \[
    p^*\overline{\tau} + 2R = \pi^2 \widetilde{\rm rot}
    : \widetilde{G} \to \mathbb{R}.
  \]
  Here the map $\widetilde{\rm rot}:\widetilde{G} \to \mathbb{R}$
  is the pullback of the translation number
  by the surjection
  $\widetilde{G} \to \widetilde{{\rm Diff}_{+}(S^1)}$.
\end{theo}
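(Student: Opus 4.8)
The plan is to obtain the identity by assembling the two ingredients already prepared in this section: the homogenized transgression formula (\ref{tau_R_rho_bar}) and the identification of $\overline{f}$ with the translation number in Proposition \ref{rho_translation}. Since $\widetilde{G}$ is the universal covering group of $G$, every element is represented by a homotopy class $[g_t]$ of a path with $g_0 = {\rm id}$, so (\ref{tau_R_rho_bar}) applies to an arbitrary element without loss of generality.

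First I would unwind the two pullbacks appearing in the statement. For the left-hand side, the projection $p$ sends $[g_t]$ to its endpoint $g_1$, so $(p^*\overline{\tau})([g_t]) = \overline{\tau}(g_1)$. For the right-hand side, the surjection $\widetilde{G} \to \widetilde{{\rm Diff}_{+}(S^1)}$ sends $[g_t]$ to the class $\widetilde{\varphi_1}$ of the boundary path $\varphi_t = g_t|_{\partial D}$; hence, by Proposition \ref{rho_translation},
\[
  \widetilde{\rm rot}([g_t]) = \widetilde{\rm rot}(\widetilde{\varphi_1}) = \overline{f}(\widetilde{\varphi_1}).
\]

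Substituting both identifications into (\ref{tau_R_rho_bar}) then gives
\[
  (p^*\overline{\tau})([g_t]) + 2R([g_t]) = \overline{\tau}(g_1) + 2R([g_t]) = \pi^2 \overline{f}(\widetilde{\varphi_1}) = \pi^2 \widetilde{\rm rot}([g_t]),
\]
which is the desired equality of functions on $\widetilde{G}$. All of the genuine analytic content---the transgression formula, the quasi-morphism property and homogenization of $f$, and the equality $\overline{f} = \widetilde{\rm rot}$---has already been established, so no substantial obstacle remains. The only point demanding attention is the consistency of the bookkeeping: one must check that $p^*\overline{\tau}$ and the pullback $\widetilde{\rm rot}$ are evaluated on the same representative $[g_t]$, and that passing to the boundary $\partial D$ is compatible with the covering-group surjection $\widetilde{G} \to \widetilde{{\rm Diff}_{+}(S^1)}$, which is precisely what makes the two sides agree as homogeneous functions rather than merely as quasi-morphisms.
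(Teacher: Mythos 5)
Your proposal is correct and follows exactly the paper's own route: the paper proves Theorem \ref{main_thm_1} by combining the homogenized identity (\ref{tau_R_rho_bar}) with Proposition \ref{rho_translation}, which is precisely your argument, with the pullback bookkeeping made explicit. Nothing further is needed.
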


Poincar\'{e}'s translation number descends to the map
${\rm rot}:{\rm Diff}_{+}(S^1) \to \mathbb{R}/\mathbb{Z}$ and this is
called Poincar\'{e}'s rotation number.
The homomorphism
$2R/\pi^2:\widetilde{G} \to \mathbb{R}$ also decends to
the homomorphism $\underline{R}: G \to \mathbb{R}/\mathbb{Z}$
(see Tsuboi\cite[Corollary 2.9]{tsuboi00}).

\begin{theo}\label{theorem:tau_R_rotation}
  Let $\underline{\tau} : G \to \mathbb{R}/\mathbb{Z}$ be the
  composition of the homogeneous quasi-morphism
  $\overline{\tau}/\pi^2 : G \to \mathbb{R}$
  and the projection $\mathbb{R} \to \mathbb{R}/\mathbb{Z}$,
  then
  \[
    \underline{\tau} + \underline{R} = {\rm rot}.
  \]
  Here the ${\rm rot} : G \to \mathbb{R}/\mathbb{Z}$
  is the pullback of the rotation number by
  the projection $G \to {\rm Diff}_{+}(S^1)$.
\end{theo}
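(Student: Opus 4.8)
The plan is to obtain this identity simply by dividing the equality of Theorem~\ref{main_thm_1} by $\pi^2$ and reducing modulo $\mathbb{Z}$. Starting from
\[
  p^*\overline{\tau} + 2R = \pi^2\,\widetilde{\rm rot} : \widetilde{G} \to \mathbb{R}
\]
on the universal covering group, I would divide through by $\pi^2$ and compose with the quotient map $\mathbb{R} \to \mathbb{R}/\mathbb{Z}$, so that the problem reduces to identifying the mod-$\mathbb{Z}$ reduction of each of the three terms with the pullback along $p$ of an $\mathbb{R}/\mathbb{Z}$-valued map already defined on $G$.

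The three identifications proceed as follows. Since $\overline{\tau}$ is defined on $G$ itself, the reduction of $p^*\overline{\tau}/\pi^2$ is by construction $p^*\underline{\tau}$. For the term $2R/\pi^2$, the cited result of Tsuboi (\cite[Corollary 2.9]{tsuboi00}) is exactly the statement that this homomorphism takes integral values on $\ker p = \pi_1(G)$ and hence descends to $\underline{R} : G \to \mathbb{R}/\mathbb{Z}$; thus its reduction is $p^*\underline{R}$. For $\widetilde{\rm rot}$, I would use that the diagram
\[
\begin{array}{ccc}
\widetilde{G} & \longrightarrow & \widetilde{{\rm Diff}_{+}(S^1)} \\
\downarrow & & \downarrow \\
G & \longrightarrow & {\rm Diff}_{+}(S^1)
\end{array}
\]
commutes, where the vertical arrows are the covering projections and the horizontal arrows are induced by $\rho$. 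Since Poincar\'{e}'s translation number on $\widetilde{{\rm Diff}_{+}(S^1)}$ descends to the rotation number on ${\rm Diff}_{+}(S^1)$, a diagram chase shows that the reduction of $\widetilde{\rm rot}$ equals $p^*{\rm rot}$.

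Combining the three identifications yields $p^*\underline{\tau} + p^*\underline{R} = p^*{\rm rot}$ as maps $\widetilde{G} \to \mathbb{R}/\mathbb{Z}$, and since $p$ is a surjective homomorphism the pullback $p^*$ is injective on functions, so the equality descends to $\underline{\tau} + \underline{R} = {\rm rot}$ on $G$, as desired. The computation itself is immediate from Theorem~\ref{main_thm_1}; the only point demanding care is the third identification, namely checking that the mod-$\mathbb{Z}$ reduction of $\widetilde{\rm rot}$ on $\widetilde{G}$ genuinely agrees with the rotation number ${\rm rot}$ pulled back from ${\rm Diff}_{+}(S^1)$ via $\rho$. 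This rests on the commutativity of the covering square above together with the fact that $\widetilde{\rm rot}$ reduces to ${\rm rot}$ at the level of the circle.
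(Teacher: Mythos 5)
Your proposal is correct and is essentially the paper's own argument: the paper states this theorem as an immediate consequence of Theorem~\ref{main_thm_1}, relying on exactly the two descent facts you invoke (Tsuboi's Corollary~2.9 for $2R/\pi^2$ and the classical descent of the translation number to the rotation number), with the reduction mod $\mathbb{Z}$ and the injectivity of $p^*$ left implicit. Your write-up merely fills in those routine details, including the commutativity of the covering square, which is the right point to check.
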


\section{The flux homomorphism case}\label{section_flux}
\subsection{The flux homomorphism and the quasi-morphism $\overline{\sigma}$}
Let us consider the subgroup
\[
  G_o = \{ g \in G \mid g(o) = o \in D \}
\]
of $G$.
Put $G_{o,{\rm rel}} = G_{\rm rel} \cap G_o$.
Then the following sequence of groups
\[
  1 \longrightarrow G_{o,{\rm rel}} \longrightarrow G_o
  \overset{\rho}{\longrightarrow} {\rm Diff}_{+}(S^1) \longrightarrow 1
\]
is an exact sequence.
On the group $G_{o,{\rm rel}}$,
the Calabi invariant is defined
as the restriction ${\rm Cal}|_{G_{o,{\rm rel}}}: G_{o,{\rm rel}} \to \mathbb{R}$.
In \cite{maruyama19}
the author studied a version of
flux homomorphism defined on $G_{o, {\rm rel}}$
which is denoted by ${\rm Flux}_{\mathbb{R}}$.
This flux homomorphism ${\rm Flux}_{\mathbb{R}}$ is defined by
\begin{equation*}
  {\rm Flux}_{\mathbb{R}}(h) = \int_{\gamma} h^*\eta - \eta
\end{equation*}
where $\gamma$ is a path from the origin $o$ to a point on the
boundary $\partial D$.
Note that the flux homomorphism is a surjective homomorphism
and is independent of the choice of $\eta$ and $\gamma$.

As in the case of Calabi invariant, the flux homomorphism can be
extended to the group $G_o$, that is,
we define the map $\sigma_{\eta,\gamma}:G_o \to \mathbb{R}$ by
\begin{equation*}
  \sigma_{\eta,\gamma}(g) = \int_{\gamma} g^*\eta - \eta.
\end{equation*}
The following transgression formula
\begin{align}\label{transgression_flux}
  {\rm Flux}_{\mathbb{R}}(h) &= \sigma_{\eta, \gamma}(h)
  \hspace{5mm} (h \in G_{o,{\rm rel}}) \nonumber \\
  - \delta \sigma_{\eta, \gamma}(g, h) &=
  \pi \xi(\rho(g), \rho(h)) \hspace{5mm} (g, h \in G_o),
\end{align}
holds,
where $\xi \in C^2({\rm Diff}_{+}(S^1);\mathbb{R})$ is an
Euler cocycle (see \cite{maruyama19}, where, in \cite{maruyama19},
the map $\sigma_{\eta,\gamma}$
is denoted by $\tau$ and
the Euler cocycle $\xi$ is
denoted by $\chi$).
Since $\xi$ is bounded, the map $\sigma_{\eta, \gamma}$ is
a quasi-morphism.
Let $\overline{\sigma}$ denote the homogenization of
$\sigma_{\eta, \gamma}$.
By arguments similar to those in Section \ref{section_tau_R},
we obtain the following proposition.

\begin{prop}\label{sigma_bar}
  \begin{enumerate}
    \item The homogenization $\overline{\sigma}:G_o \to \mathbb{R}$
    is independent of the choice of $\eta$ and $\gamma$.
    \item The homogenization $\overline{\sigma} : G_o \to \mathbb{R}$
    is an extension of the flux homomorphism.
    In particular, $\overline{\sigma}$ is a
    surjective homogeneous quasi-morphism.
    \item The bounded cohomology class $[\delta \overline{\sigma}]$ is equal to
    $-\pi$ times the class $\rho^* e_b$, where $e_b$ is the bounded
    Euler class.
  \end{enumerate}
\end{prop}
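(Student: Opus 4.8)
The plan is to follow the template established in Section \ref{section_tau_R} for $\overline{\tau}$, proving the three assertions in turn. In each case the mechanism is the same: two representatives of $\sigma$ differ by a bounded function, so their homogenizations coincide, and the transgression formula (\ref{transgression_flux}) supplies the link to the flux homomorphism and to the bounded Euler class.

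For part (i) I would first treat the dependence on $\eta$. If $\eta'$ is another primitive of $\omega$, then $\eta' - \eta$ is closed on $D$, hence exact, say $\eta' - \eta = df$. Using $g^*(df) = d(f \circ g)$ one computes
\[
  \sigma_{\eta',\gamma}(g) - \sigma_{\eta,\gamma}(g)
  = \int_{\gamma} \bigl( g^*(df) - df \bigr)
  = \int_{\gamma} d\bigl( f\circ g - f \bigr)
  = \bigl[\, f\circ g - f \,\bigr]_o^b,
\]
where $b \in \partial D$ is the boundary endpoint of $\gamma$. Since every $g \in G_o$ fixes the origin, the contribution at $o$ vanishes, and the boundary contribution $f(g(b)) - f(b)$ is bounded by $2\sup_D |f|$, uniformly in $g$. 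For the dependence on $\gamma$, the key observation is that $g^*\eta - \eta$ is \emph{closed} for every $g \in G$, because $g$ is a symplectomorphism and so $d(g^*\eta - \eta) = g^*\omega - \omega = 0$. As $D$ is simply connected, $\int_{\gamma}(g^*\eta - \eta)$ depends only on the endpoints of $\gamma$; replacing $\gamma$ by a path with the same origin but a different boundary endpoint changes $\sigma_{\eta,\gamma}(g)$ by the integral of $g^*\eta - \eta$ over a boundary arc. Taking $\eta = \lambda = (r^2\, d\theta)/2$, this equals $\tfrac12$ times the difference of angular lengths of the arc and of its image under $\rho(g)$, hence is bounded uniformly in $g$. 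Thus any two choices of $(\eta,\gamma)$ give functions differing by a bounded amount, and their homogenizations agree.

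Parts (ii) and (iii) are then direct analogues of Propositions \ref{tau_surj} and \ref{tau_and_bdd_euler}. For (ii), given $h \in G_{o,{\rm rel}}$ the transgression formula (\ref{transgression_flux}) gives $\sigma_{\eta,\gamma}(h) = {\rm Flux}_{\mathbb{R}}(h)$, and since ${\rm Flux}_{\mathbb{R}}$ is a homomorphism,
\[
  \overline{\sigma}(h)
  = \lim_{n\to\infty} \frac{\sigma_{\eta,\gamma}(h^n)}{n}
  = \lim_{n\to\infty} \frac{{\rm Flux}_{\mathbb{R}}(h^n)}{n}
  = {\rm Flux}_{\mathbb{R}}(h),
\]
so $\overline{\sigma}$ restricts to the flux homomorphism, and its surjectivity follows from that of ${\rm Flux}_{\mathbb{R}}$. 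For (iii), the bounded function $b = \sigma_{\eta,\gamma} - \overline{\sigma}$ satisfies $\delta\sigma_{\eta,\gamma} - \delta\overline{\sigma} = \delta b$, so $[\delta\overline{\sigma}] = [\delta\sigma_{\eta,\gamma}]$ in $H_b^2(G_o;\mathbb{R})$; the transgression formula (\ref{transgression_flux}) identifies $-\delta\sigma_{\eta,\gamma}$ with $\pi\,\rho^*\xi$, where $\xi$ represents $e_b$, yielding $[\delta\overline{\sigma}] = -\pi\,\rho^* e_b$.

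The main obstacle, relative to the $\overline{\tau}$ case, is the $\gamma$-independence in part (i): in Section \ref{section_tau_R} there was no path to vary, so the new input is the closedness of $g^*\eta - \eta$ together with the simple connectivity of $D$, which reduces the path integral to its endpoint values and confines the ambiguity to a uniformly bounded boundary term. Once this is secured, the remaining arguments are essentially verbatim transcriptions of the Calabi case.
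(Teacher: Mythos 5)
Your proposal is correct and takes essentially the same route as the paper, whose entire proof is the remark ``by arguments similar to those in Section \ref{section_tau_R}'': bounded differences of quasi-morphisms have equal homogenizations, and the transgression formula (\ref{transgression_flux}) gives parts (ii) and (iii) exactly as Propositions \ref{tau_surj} and \ref{tau_and_bdd_euler} did for $\overline{\tau}$. The one point with no analogue in the Calabi case, the $\gamma$-independence in part (i), you handle correctly via the closedness of $g^*\eta - \eta$, simple connectivity of $D$, and the uniformly bounded boundary-arc term.
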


\begin{rema}
  For an inner point $a \in D$,
  put $G^{a} = \{ g \in G \mid g(a) = a \}$.
  We can define the homogeneous
  quasi-morphism $\overline{\sigma}_a : G^a \to \mathbb{R}$
  in the same way.
  We can also show that $[\delta \overline{\sigma}_a]
  = -\pi\rho^*e_b$.
  Thus, for inner points $a, b \in D$,
  we have a homomorphism
  \[
    \overline{\sigma}_a - \overline{\sigma}_b
    : G^a \cap G^b \to \mathbb{R}
  \]
  and this is equal to the action difference defined in
  Polterovich\cite{polterovich02}(see also \cite{gal_kedra11}).
\end{rema}

\subsection{Two extensions $\overline{\sigma}$ and $S$ of the flux homomorphism}
Let $\widetilde{G_o}$ be the universal covering group of $G_o$
with respect to the $C^{\infty}$-topology.
In this section, we introduce a homomorphism
$S: \widetilde{G_o} \to \mathbb{R}$
and show that the difference of $\overline{\sigma}$ and $S$ is equal to the
translation number.

For a path $g_t$ in $G_o$ such that $g_0 = {\rm id}$,
the time-dependent vector field $X_t$
is defined as in Section \ref{section_tau_R}.
Then we put
\begin{equation}
  S(g_t) = \int_0^1 \int_{\gamma} i_{X_t} \omega dt,
\end{equation}
where $\gamma:[0,1] \to D$ is a path from the origin $o \in D$
to a point on the boundary $\partial D$.
Take the time-dependent
$C^{\infty}$-function $f_t : D \to \mathbb{R}$
satisfying $i_{X_t}\omega = df_t$
and $f_t(o) = 0$.
Then we have
\begin{equation*}
  S(g_t) = \int_0^1 \int_{\gamma} i_{X_t} \omega dt
  = \int_0^1 \int_{\gamma} df_t dt
  = \int_0^1 f_t(\gamma(1)) dt.
\end{equation*}
Note that, for any $t \in [0, 1]$, the restriction
$f_t|_{\partial D} :\partial D \to \mathbb{R}$ is a
constant function.
This implies that the function $S$ is independent of the
choice of $\gamma$.


\begin{lemma}
  Let $g_t$ be a path in $G_o$ such that $g_0 = {\rm id}$ and $X_t$
  the time-dependent vector field defined by
  $X_t = (\partial g_t / \partial t)\circ g_t^{-1}$,
  then
  \begin{equation}
    \sigma_{\eta, \gamma}(g_1) - S(g_t)
    =
    \int_0^1 (g_t^*(i_{X_t}\eta))(\gamma(1))dt.
  \end{equation}
\end{lemma}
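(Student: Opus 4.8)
The plan is to mimic the transgression-type computation that produced equality~(\ref{calabi_r_rot}) in the Calabi case, but now for the path integral $\sigma_{\eta,\gamma}$ instead of the area integral $\tau_\eta$. The key object is the function
\[
  t \mapsto \sigma_{\eta,\gamma}(g_t) = \int_\gamma g_t^*\eta - \eta,
\]
whose value at $t=1$ is the left-hand quantity $\sigma_{\eta,\gamma}(g_1)$ (using $g_0 = {\rm id}$, so $\sigma_{\eta,\gamma}(g_0) = 0$). First I would write this as a time integral of its derivative,
\[
  \sigma_{\eta,\gamma}(g_1) = \int_0^1 \frac{d}{dt}\left( \int_\gamma g_t^*\eta \right) dt,
\]
and then pass the $t$-derivative inside the spatial integral over $\gamma$.

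The main computation is to evaluate $\frac{\partial}{\partial t}(g_t^*\eta)$. Using the definition $X_t = (\partial g_t/\partial t)\circ g_t^{-1}$, the standard formula for the time-derivative of a pullback along a flow gives $\frac{\partial}{\partial t} g_t^*\eta = g_t^*(\mathcal{L}_{X_t}\eta)$, where $\mathcal{L}_{X_t}$ is the Lie derivative. Then I would apply Cartan's magic formula $\mathcal{L}_{X_t}\eta = d(i_{X_t}\eta) + i_{X_t}d\eta = d(i_{X_t}\eta) + i_{X_t}\omega$, since $d\eta = \omega$. The term $i_{X_t}\omega = df_t$ is exact (with $f_t(o)=0$), which is precisely the integrand appearing in the definition of $S(g_t)$. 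So after integrating over $\gamma$, the $i_{X_t}\omega$ part contributes
\[
  \int_0^1 \int_\gamma g_t^*(i_{X_t}\omega)\, dt,
\]
which should match $S(g_t) = \int_0^1\int_\gamma i_{X_t}\omega\, dt$ modulo the pullback by $g_t$; here one uses that $g_t$ fixes $o = \gamma(0)$ and maps $\partial D$ to itself, so the relevant boundary values of $f_t \circ g_t$ and $f_t$ agree (both $f_t$ and its pullback vanish at $o$ and are constant on $\partial D$). The remaining exact term $d(i_{X_t}\eta)$ integrates over the path $\gamma$ to the boundary-minus-origin difference of $g_t^*(i_{X_t}\eta)$, yielding exactly $\int_0^1 (g_t^*(i_{X_t}\eta))(\gamma(1))\, dt$, since the contribution at $\gamma(0)=o$ vanishes because $X_t$ is tangent to the boundary and the flow fixes $o$.

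The hard part will be bookkeeping the boundary terms carefully: specifically, verifying that the $i_{X_t}\omega$ contribution reassembles exactly into $S(g_t)$ after accounting for the $g_t^*$ pullback, and confirming that the value at the origin endpoint $\gamma(0)=o$ drops out. The cleanest route is to observe that $\int_\gamma g_t^*(df_t) = f_t(g_t(\gamma(1))) - f_t(g_t(o)) = f_t(g_t(\gamma(1)))$, and since $f_t$ is constant on $\partial D$ and $g_t$ preserves $\partial D$, this equals $f_t(\gamma(1))$, recovering the integrand of $S$. Collecting the two contributions — the $S(g_t)$ piece and the $\int_0^1 (g_t^*(i_{X_t}\eta))(\gamma(1))\,dt$ piece — and rearranging gives the claimed identity $\sigma_{\eta,\gamma}(g_1) - S(g_t) = \int_0^1 (g_t^*(i_{X_t}\eta))(\gamma(1))\,dt$.
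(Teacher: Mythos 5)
Your proof is correct and follows essentially the same route as the paper: the paper's proof starts from the integrated identity $g_1^*\eta - \eta = d\bigl( \int_0^1 g_t^*f_t\,dt + \int_0^1 g_t^*(i_{X_t}\eta)\,dt \bigr)$ (which is exactly the Lie-derivative/Cartan computation you spell out) and then evaluates the path integral over $\gamma$ at its endpoints, with the origin terms vanishing because $g_t(o)=o$, $f_t(o)=0$, $X_t(o)=0$, and the boundary term giving $S(g_t)$ since $f_t|_{\partial D}$ is constant and $g_t$ preserves $\partial D$. The only slip is your aside that the origin contribution vanishes partly ``because $X_t$ is tangent to the boundary''; the relevant fact there is simply $X_t(o)=0$, which you also invoke, so nothing is actually missing.
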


\begin{proof}
  Note that the identity
  \[
    g_1^* \eta - \eta
    =
    d\left( \int_0^1 g_t^*f_t dt +
    \int_0^1 g_t^* (i_{X_t} \eta) dt \right).
  \]
  holds.
  Thus we have
  \begin{align}\label{calculus}
    \nonumber
    \sigma_{\eta, \gamma}(g_1)
    &=
    \int_{\gamma} g_1^* \eta - \eta
    =
    \int_{\gamma}d\left( \int_0^1 g_t^*f_tdt +
    \int_0^1 g_t^* (i_{X_t} \eta) dt \right)\\
    &=
    \left( \int_0^1 (g_t^*f_t)(\gamma(1))dt +
    \int_0^1 (g_t^* (i_{X_t} \eta))(\gamma(1)) dt \right)\\
    &\nonumber \hspace{1cm} -
    \left( \int_0^1 (g_t^*f_t) (\gamma(0))dt +
    \int_0^1 (g_t^* (i_{X_t} \eta))(\gamma(0)) dt \right).
  \end{align}
  Since $(g_t^* f_t)(\gamma(0)) = 0$ and $X_t(\gamma(0)) = 0$
  for any $t \in [0,1]$, the second term in (\ref{calculus})
  is eqaul to $0$.
  Moreover, since the function $f_t|_{\partial D}$ is constant
  for any $t\in [0, 1]$,
  the first term in (\ref{calculus}) is equal to
  $S(g_t) + \int_0^1 (g_t^*(i_{X_t}\eta))(\gamma(1))dt$
  and the lemma follows.
\end{proof}

Put $\eta = (r^2 d\theta) /2$
and $\varphi_t = g_t|_{\partial D}$ in ${\rm Diff}_{+}(S^1)$.
Take a path $\gamma : [0, 1] \to D$ defined by $\gamma(t) = (t, 0)$.
Let $\widetilde{\varphi_t} \in \widetilde{{\rm Diff}_{+}(S^1)}$
be the lift of $\varphi_t$ such that $\widetilde{\varphi_0} = {\rm id}$.
As in the equation (\ref{i_X_t_eta_integral}), we have
\[
  \int_0^1 g_t^* (i_{X_t} \eta)(\gamma(1)) dt
  =
  \frac{1}{2} \int_0^1
  \frac{\partial \widetilde{\varphi_t}}{\partial t}(0) dt
  =
  \frac{1}{2} \widetilde{\varphi_1}(0),
\]
where we identify $\gamma(1) \in \partial D$
with $0 \in \mathbb{R}/2\pi\mathbb{Z}$
by the identification
$\partial D = S^1 = \mathbb{R}/2\pi \mathbb{Z}$.
Thus we have
\begin{equation}\label{sigma_S_rot}
  \sigma_{\eta, \gamma}(g_1) - S(g_t)
  =
  \frac{1}{2} \widetilde{\varphi_1}(0).
\end{equation}
This equality (\ref{sigma_S_rot})
implies that the value $S(g_t)$ depends only on the
homotopy class relatively to fixed ends of the path $g_t$
in $G_o$.
Henceforth, the map
$S: \widetilde{G_o} \to \mathbb{R} : [g_t] \mapsto S(g_t)$
is well-defined.
Moreover, the map $S$ gives rise to a homomorphism.
In fact, let $g_t, h_t$ be paths in $G_o$,
then
\begin{align*}
  &S(g_t h_t) - S(g_t) - S(h_t)\\
  &= \sigma_{\eta, \gamma}(g_1 h_1) -
  \sigma_{\eta, \gamma}(g_1) - \sigma_{\eta, \gamma}(h_1)
  -
  \frac{1}{2}(\widetilde{\varphi_1}\widetilde{\psi_1}(0)
  -\widetilde{\varphi_1}(0) - \widetilde{\psi_1}(0))
\end{align*}
and this is equal to $0$ (see Maruyama\cite{maruyama19}).
Thus we have
\begin{equation}\label{sigma_S_trans}
  \overline{\sigma}(g_1) - S([g_t])
  =
  \lim_{n \to \infty}\frac{\sigma_{\eta, \gamma}
  (g_1^n) - S([g_t]^n)}{n}
  =
  \pi \lim_{n\to \infty} \frac{\widetilde{\varphi_1}^n(0)}{2\pi n}
  = \pi \widetilde{\rm rot}(\widetilde{\varphi_1}).
\end{equation}
By the above equality (\ref{sigma_S_trans}),
we obtain the following theorem.

\begin{theo}\label{main_thm_2}
  Let $p: \widetilde{G_o} \to G_o$ be the projection.
  Then, we have
  \[
    p^*\overline{\sigma} - S = \pi \widetilde{\rm rot}
    : \widetilde{G_o} \to \mathbb{R}.
  \]
  Here the map $\widetilde{\rm rot}:\widetilde{G_o} \to \mathbb{R}$
  is the pullback of the translation number
  by the surjection
  $\widetilde{G_o} \to \widetilde{{\rm Diff}_{+}(S^1)}$.
\end{theo}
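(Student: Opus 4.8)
The plan is to obtain the identity pointwise on $\widetilde{G_o}$ by homogenizing equation (\ref{sigma_S_rot}), in exact parallel with the derivation of Theorem \ref{main_thm_1} in Section \ref{section_tau_R}. First I would fix $[g_t] \in \widetilde{G_o}$, represented by a path $g_t$ with $g_0 = {\rm id}$. Since $p([g_t]) = g_1$ we have $(p^*\overline{\sigma})([g_t]) = \overline{\sigma}(g_1)$, and writing $\widetilde{\varphi_1}$ for the image of $[g_t]$ under the homomorphism $\widetilde{G_o} \to \widetilde{{\rm Diff}_{+}(S^1)}$, the assertion reduces to the single scalar equality $\overline{\sigma}(g_1) - S([g_t]) = \pi \widetilde{\rm rot}(\widetilde{\varphi_1})$.

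The key step is to apply (\ref{sigma_S_rot}) not to $g_t$ but to a path representing the $n$-th power $[g_t]^n$. Its endpoint in $G_o$ is $g_1^n$, and because the boundary restriction $G_o \to {\rm Diff}_{+}(S^1)$ is a homomorphism, the lift of its boundary restriction is $\widetilde{\varphi_1}^{\,n}$; hence (\ref{sigma_S_rot}) gives $\sigma_{\eta,\gamma}(g_1^n) - S([g_t]^n) = \frac{1}{2}\widetilde{\varphi_1}^{\,n}(0)$. Using that $S$ is a homomorphism, so $S([g_t]^n) = n\,S([g_t])$, I would divide by $n$ and let $n \to \infty$: the left-hand side tends to $\overline{\sigma}(g_1) - S([g_t])$ by the definition of the homogenization $\overline{\sigma}(g_1) = \lim_{n\to\infty}\sigma_{\eta,\gamma}(g_1^n)/n$, while the right-hand side tends to $\frac{1}{2}\lim_{n\to\infty}\widetilde{\varphi_1}^{\,n}(0)/n = \pi \widetilde{\rm rot}(\widetilde{\varphi_1})$ by the very definition of the translation number. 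This is precisely the claimed equality (\ref{sigma_S_trans}), and recalling that $\widetilde{\rm rot}$ on $\widetilde{G_o}$ is defined as the pullback of $\widetilde{\rm rot}$ on $\widetilde{{\rm Diff}_{+}(S^1)}$ finishes the proof.

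The heavy lifting has already been done in the preceding lemmas, so I do not anticipate a serious obstacle; the only points requiring care are bookkeeping. First, equation (\ref{sigma_S_rot}) was derived for the specific choices $\eta = (r^2 d\theta)/2$ and $\gamma(t) = (t,0)$, so one must invoke Proposition \ref{sigma_bar} (i) to know that the resulting homogenization $\overline{\sigma}$ is independent of these choices. Second, one must check that passing to $[g_t]^n$ genuinely produces the endpoint $g_1^n$ with boundary lift $\widetilde{\varphi_1}^{\,n}$, i.e.\ that the relevant maps respect the group structures on the universal covers. Granting these, the essential content of the theorem is simply that the homogenization limit commutes with the linear term $S$ and isolates the translation number, which holds because $S$ is exactly homogeneous of degree one in $n$.
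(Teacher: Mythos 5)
Your proposal is correct and is essentially the paper's own argument: the paper proves the theorem via equation (\ref{sigma_S_trans}), which applies (\ref{sigma_S_rot}) to $[g_t]^n$, uses that $S$ is a homomorphism to write $S([g_t]^n) = nS([g_t])$, divides by $n$, and identifies the limit $\frac{1}{2}\lim_{n\to\infty}\widetilde{\varphi_1}^n(0)/n$ with $\pi\widetilde{\rm rot}(\widetilde{\varphi_1})$. Your bookkeeping remarks (independence of $\eta$, $\gamma$ via Proposition \ref{sigma_bar}, and compatibility of powers with the covering group structure) are exactly the points the paper uses implicitly.
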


\begin{rema}
  By considering the map to $\mathbb{R}/\mathbb{Z}$,
  we obtain a theorem similar to Theorem
  \ref{theorem:tau_R_rotation}
  for $\overline{\sigma}$, $S$, and the rotation number.
\end{rema}

\begin{rema}
  By (\ref{sigma_S_rot}), we obtain the formula similar to
  \cite[Corollary (2.9)]{tsuboi00} and
  thus the formula similar to \cite[Proposition (3.1)]{tsuboi00}.
  This implies that the homomorphism ${\rm Flux}_{\mathbb{R}}$
  cannot be extended to a homomorphism on $G_o$.
\end{rema}

\section{Relation between $\overline{\tau}$ and $\overline{\sigma}$}
The restriction ${\rm Cal}|_{G_{o,{\rm rel}}} : G_{o,{\rm rel}} \to \mathbb{R}$
of the Calabi invariant
remains surjective.
So the restriction $\overline{\tau} : G_o \to \mathbb{R}$ is also
surjective homogeneous quasi-morphism.
Therefore we have two non-trivial homogeneous quasi-morphisms
$\overline{\tau}, \overline{\sigma} \in Q(G_o)$.
By Proposition \ref{tau_and_bdd_euler} and Proposition
\ref{sigma_bar},
the class $[\delta \overline{\tau}]$ coincides
with $\pi[\delta \overline{\sigma}]$
in $H_b^2(G_o;\mathbb{R})$.
Thus the difference
$\overline{\tau} - \pi \overline{\sigma}$ is a homomorphism
on $G_o$.
This implies that, in contrast with
${\rm Cal}$ and ${\rm Flux}_{\mathbb{R}}$,
the difference ${\rm Cal} - \pi {\rm Flux}_{\mathbb{R}}$ can be
extended to a homomorphism
$\overline{\tau} - \pi \overline{\sigma}:G_o \to \mathbb{R}$.

\begin{theo}\label{main_thm_3}
  The difference $\overline{\tau} - \pi \overline{\sigma}
  : G_o \to \mathbb{R}$ is a continuous surjective homomorphism.
\end{theo}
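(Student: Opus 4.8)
The plan is to prove in turn that $\Phi := \overline{\tau} - \pi\overline{\sigma}$ is a homomorphism, is continuous, and is surjective. The homomorphism property is essentially recorded before the statement: by Propositions~\ref{tau_and_bdd_euler} and~\ref{sigma_bar} we have $[\delta\overline{\tau}] = -\pi^2\rho^*e_b$ and $[\delta\overline{\sigma}] = -\pi\rho^*e_b$ in $H_b^2(G_o;\mathbb{R})$, so $[\delta\Phi] = 0$. Since the kernel of the comparison map $Q(G_o)/H^1(G_o;\mathbb{R}) \to H_b^2(G_o;\mathbb{R})$, $[\phi]\mapsto[\delta\phi]$, is trivial (see \cite{calegari09}), a homogeneous quasi-morphism with $[\delta\phi]=0$ lies in $H^1(G_o;\mathbb{R})$; that is, $\Phi$ is a homomorphism.

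\emph{Continuity.} This is the step I expect to be the main obstacle, since a homogeneous quasi-morphism is in general far from continuous and $\Phi$ is built from the homogenization limit. The idea is to pass to the universal cover, where $\Phi$ is identified with an explicit combination of the continuous homomorphisms $R$ and $S$. Restricting the identity of Theorem~\ref{main_thm_1} to $\widetilde{G_o}$ gives $p^*\overline{\tau} + 2R = \pi^2\widetilde{\rm rot}$, while Theorem~\ref{main_thm_2} gives $\pi(p^*\overline{\sigma} - S) = \pi^2\widetilde{\rm rot}$. Subtracting these cancels the translation number and yields
\[
  p^*\Phi = -2R - \pi S
\]
on $\widetilde{G_o}$. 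Now $R$ and $S$ are continuous in the $C^{\infty}$-topology: this is visible from their defining integrals $R([g_t]) = \int_0^1(\int_D f_{X_t}\,\omega)\,dt$ and $S([g_t]) = \int_0^1 f_t(\gamma(1))\,dt$, which depend continuously on the path $g_t$ through its generating Hamiltonian. Hence $p^*\Phi$ is continuous, and since $p:\widetilde{G_o}\to G_o$ is an open (thus quotient) map, $\Phi$ is continuous on $G_o$.

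\emph{Surjectivity.} Since $\overline{\tau}|_{G_{o,{\rm rel}}} = {\rm Cal}$ (Proposition~\ref{tau_surj}) and $\overline{\sigma}|_{G_{o,{\rm rel}}} = {\rm Flux}_{\mathbb{R}}$ (Proposition~\ref{sigma_bar}), we have $\Phi|_{G_{o,{\rm rel}}} = {\rm Cal} - \pi\,{\rm Flux}_{\mathbb{R}}$. Fix a smooth function $H$ supported in a small disk $B\subset D$ containing neither the origin nor any boundary point, with $\int_D H\,\omega\neq 0$, and let $\phi_H^s$ be its Hamiltonian flow; then $\phi_H^s\in G_{o,{\rm rel}}$ for every $s$. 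Choosing the path $\gamma$ to avoid $B$ gives ${\rm Flux}_{\mathbb{R}}(\phi_H^s)=0$, while the relation ${\rm Cal}(h_1)=-2R([h_t])$ of Section~\ref{section_tau_R} gives ${\rm Cal}(\phi_H^s) = -2s\int_D H\,\omega$. Therefore $\Phi(\phi_H^s) = -2s\int_D H\,\omega$ takes every real value as $s$ ranges over $\mathbb{R}$, so $\Phi$ is surjective, completing the proof.
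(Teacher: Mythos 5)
Your proof is correct, and it takes a genuinely different route from the paper's on both substantive points. For surjectivity, the paper stays inside $G_{o,{\rm rel}}$ with explicit twist maps $g_s(r,\theta)=(r,\theta+sf(r))$, where $f$ is non-increasing, $\equiv 1$ near $r=0$, and $f(1)=0$, and computes both terms against $\eta=(r^2d\theta)/2$ and $\gamma(r)=(r,0)$: one gets ${\rm Cal}(g_s)=\frac{s\pi}{2}\int_0^1 r^4 f'(r)\,dr$ and $\pi{\rm Flux}_{\mathbb{R}}(g_s)=\frac{s\pi}{2}\int_0^1 r^2 f'(r)\,dr$, so the difference is $\frac{s\pi}{2}\int_0^1(r^4-r^2)f'(r)\,dr$, a nonzero multiple of $s$ because $f'\le 0$, $f'\not\equiv 0$, and $r^4-r^2<0$ on $(0,1)$. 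You instead kill the flux term outright by taking a Hamiltonian supported in a disk $B$ avoiding the origin and the boundary and choosing $\gamma$ disjoint from $B$ (legitimate, since ${\rm Flux}_{\mathbb{R}}$ is independent of $\gamma$), and you evaluate the Calabi term via the lemma ${\rm Cal}(h_1)=-2R([h_t])$, getting ${\rm Cal}(\phi_H^s)=-2s\int_D H\omega$; both computations are sound, and yours has the advantage of isolating one invariant rather than comparing two integrals. On continuity your treatment is in fact more complete than the paper's: the paper's proof addresses only surjectivity (the homomorphism property having been noted in the paragraph preceding the statement) and is silent on continuity, whereas you derive $p^*(\overline{\tau}-\pi\overline{\sigma})=-2R-\pi S$ by combining Theorems \ref{main_thm_1} and \ref{main_thm_2} so that the $\pi^2\widetilde{\rm rot}$ terms cancel, and then push continuity down through the covering projection, which is a quotient map. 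The one place that deserves slightly more care is the claimed continuity of $R$ and $S$ on the universal covering groups: since elements of $\widetilde{G_o}$ are homotopy classes of paths, ``the integrals depend continuously on the path'' needs the standard remark that nearby classes admit $C^\infty$-close representatives (equivalently, that near the identity these homomorphisms factor through the endpoint on a simply connected neighborhood), after which the integral formulas do give continuity; this is routine, and in any case more justification than the paper supplies. As a small bonus, your identity $p^*(\overline{\tau}-\pi\overline{\sigma})=-2R-\pi S$ also re-proves the homomorphism property without bounded cohomology, since $p$ is a surjective homomorphism.
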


\begin{proof}
  On the group $G_{o,{\rm rel}}$, the homomorphism
  $\overline{\tau} - \pi \overline{\sigma}$ is equal to
  ${\rm Cal} - \pi {\rm Flux}_{\mathbb{R}}$.
  Put the non-increasing $C^{\infty}$-function
  $f : [0,1] \to \mathbb{R}$
  which is equal to $1$ near $r = 0$ and $f(1) = 0$.
  Then, for $s\in \mathbb{R}$, we define a diffeomorphism
  $g_s$ in $G_{o,{\rm rel}}$ by
  \[
    g_s(r, \theta) = (r, \theta + sf(r))
  \]
  where $(r, \theta) \in D$ is the polar coordinates.
  For
  \[
    \eta = (r^2 d\theta)/2,
    \hspace{5mm}
    \gamma(r) = (r, 0) \in D,
  \]
  we have
  \[
    {\rm Cal}(g_s) = \frac{s\pi}{2}\int_0^1 r^4
    \frac{\partial f}{\partial r}dr \  , \
    \pi {\rm Flux}_{\mathbb{R}}(g_s) = \frac{s\pi}{2}\int_0^1 r^2
    \frac{\partial f}{\partial r}dr.
  \]
  This implies that the difference
  $\overline{\tau} - \pi \overline{\sigma}$ is surjective on $G_{o,{\rm rel}}$,
  and so is on $G_o$.
\end{proof}

\bibliographystyle{amsplain}
\bibliography{samplebib}
\end{document}